\newtheorem{theorem}{Theorem}[section]
\newtheorem{lemma}[theorem]{Lemma}
\newtheorem{proposition}[theorem]{Proposition}
\newtheorem{corollary}[theorem]{Corollary}
\theoremstyle{definition}
\newtheorem{definition}[theorem]{Definition}
\newtheorem{example}[theorem]{Example}
\newcommand{\R}{\mathbb{R}}
\newcommand{\N}{\mathbb{N}}
\newcommand{\C}{\mathbb{C}}
\numberwithin{equation}{section}
\begin{document}

\title[]{The orbits of generalized derivatives}
\date{\today}

\author{Alastair Fletcher}
\address{Department of Mathematical Sciences, Northern Illinois University, DeKalb, IL 60115-2888. USA}
\email{fletcher@math.niu.edu}
\thanks{The first named author was supported by a grant from the Simons Foundation, \#352034}

\author{Ben Wallis}
\email{benwallis@live.com}

%
%

\begin{abstract}
The infinitesimal space of a quasiregular mapping was introduced by Gutlyanskii et al \cite{GMRV}. Quasiregular mappings are only differentiable almost everywhere, and so the infinitesimal space generalizes the notion of derivative to this class of mappings. In this paper, we show that the infinitesimal space is either simple, that is, it consists of only one mapping, or it contains uncountable many. To achieve this, we define the orbit of a given point as its image under all elements of the infinitesimal space. We prove that this orbit is a compact and connected subset of $\R^n \setminus \{ 0 \}$ and moreover, every such set can be realized as an orbit space in dimension two. We conclude with some examples exhibiting features of orbits.
\end{abstract}
 
\maketitle
\theoremstyle{plain}

\section{Introduction}

Quasiregular mappings are the natural generalization to $\R^n$, for $n\geq 2$, of holomorphic mappings in the plane. 
These are mappings with bounded distortion, and share many properties with holomorphic mappings such as value distribution and normal family results, see for example Rickman's monograph \cite{R} for much more information on this. However, they are much more flexible since they only need to be differentiable almost everywhere. 

For holomorphic maps, one only needs to look at the multiplier $f'(z_0)$ to determine the behaviour of $f$ near $z_0$. This fact plays an important role of the classification of fixed points. For quasiregular mappings, the failure of differentiability everywhere means it is a more subtle issue to describe the behaviour near a particular point.

As a generalization of the notion of a derivative for quasiregular mappings, Gutlyanskii et al \cite{GMRV} introduced generalized derivatives. The bounded distortion property and normal family machinery for quasiregular mappings mean that generalized derivatives always exist. If a mapping is differentiable at a particular point, then the generalized derivative is nothing other than a scaled version of the derivative. It is possible for a quasiregular mapping to have more than one generalized derivative at a particular point, see \cite{FMWW}. We then call the collection of all generalized derivatives at a particular point $x_0$ for a given quasiregular mapping $f$ the infinitesimal space of $f$ at $x_0$, and denote it by $T(x_0,f)$.

The main aim of the current note is to prove that the infinitesimal space either contains one element or uncountably many. The idea is to look at the orbit of a point under all elements of the infinitesimal space. We will show that this orbit arises as an accumulation set of a curve, from which the required property follows. For the converse, we also show that in the plane, every compact connected set in $\R^2 \setminus \{ 0 \}$ arises as an orbit space of a quasiconformal map $f:\R^2 \to \R^2$. We restrict ourselves here to dimension two to take advantage of computations involving the complex dilatation which are unavailable in higher dimensions. As part of the proof of this result, we construct distorted versions of logarithmic spiral maps, see Lemma \ref{lem:4}, which could conceivably be of independent interest. We refer to \cite{AIPS} and the references therein for recent work related to logarithmic spiral maps.

The paper is organised as follows. In section two, we cover preliminary material on quasiregular mappings and generalized derivatives and state our main results. In section three, we define and study properties of the orbit of a point under an infinitesimal space of a quasiregular mapping. In section four, we show that in dimension two, we can realize any non-empty, compact, connected subset of the punctured plane as an orbit space. Finally, in section five we exhibit some examples and show in particular that the behaviour of the generalized derivative on a compact set does not determine the generalized derivative.

\section{Preliminaries}

\subsection{Quasiregular mappings}

We start by recalling the definition of a quasiregular mapping.

\begin{definition}
\label{def:qr}
Let $n \geq 2$ and $U$ a domain in $\R^n$. Then a continuous mapping $f:U \to \R^n$ is called quasiregular if $f$ is in the Sobolev space $W^1_{n, loc}(U)$ and there exists $K\in [1, \infty)$ so that 
\[ |f'(x)|^n \leq K J_f(x) \: \operatorname{a.e.} \]
The smallest $K$ here is called the outer dilatation $K_O(f)$ of $f$. If $f$ is quasiregular, then it is also true that
\[ J_f(x) \leq K' \ell( f'(x) )^n \: \operatorname{a.e.}\]
for some $K' \in [1,\infty)$. Here, $\ell ( f'(x) ) = \inf_{|h|=1 } |f'(x)h|$. The smallest $K'$ for which this holds is called the inner dilatation $K_I(f)$ of $f$. The maximal dilatation is then $K(f) = \max \{ K_O(f) , K_I(f) \}$. We say that $f$ is $K$-quasiregular if $K(f) \leq K$.
\end{definition}

If $U$ is a domain in $\R^n$ with non-empty boundary, then for $x\in U$, we denote by $d(x,\partial U)$ the Euclidean distance from $x$ to $\partial U$.
One of the important properties of quasiregular mappings is that they have bounded linear distortion, which we now define.

\begin{definition}
\label{def:linear}
Let $n\geq 2$, $U \subset \R^n$ a domain, $x\in U$ and $f:U \to \R^n$ be $K$-quasiregular. For $0<r<d(x,\partial U)$, we define
\[ \ell_f(x,r) = \inf_{|y-x|=r} |f(y) - f(x) |, \: L_f(x,r) = \sup_{|y-x| = r} |f(y)-f(x)|.\]
The linear distortion of $f$ at $x$ is
\[ H(x,f) = \limsup_{r\to 0} \frac{L_f(x,r)}{\ell_f (x,r) }.\]
\end{definition}

The local index $i(x,f)$ of a quasiregular mapping $f$ at the point $x$ is
\[ i(x,f) = \inf_N \sup_y \operatorname{card} ( f^{-1}(y) \cap N ),\]
where the infimum is taken over all neighbourhoods $N$ of $x$. In particular, $f$ is locally injective  at $x$ if and only if $i(x,f) = 1$.

\begin{theorem}[Theorem II.4.3, \cite{R}]
\label{thm:lindist}
Let $n\geq 2$, $U\subset \R^n$ a domain and $f:U \to \R^n$ a non-constant quasregular mapping. Then for all $x\in U$,
\[ H(x,f) \leq C < \infty,\]
where $C$ is a constant that depends only on $n$ and the product $i(x,f)K_O(f)$.
\end{theorem}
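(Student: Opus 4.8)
The plan is to reduce the statement to a one-sided distortion estimate at small scales and then to prove that estimate by comparing the conformal modulus of a suitable curve family in a normal neighbourhood of $x$ with the modulus of its image. Since $f$ is non-constant and quasiregular, Reshetnyak's theorem shows that $f$ is discrete and open; in particular $i(x,f)<\infty$, and for every sufficiently small $s>0$ the point $x$ has a normal neighbourhood $U_s=U_f(x,s)$ with $f(U_s)=B(a,s)$, where $a:=f(x)$, on which $f$ restricts to a proper map of degree $i(x,f)$ with $f^{-1}(a)\cap U_s=\{x\}$ and $f(\partial U_s)=\partial B(a,s)$. Since $H(x,f)$ is a local quantity and $f$ is open, it is enough to produce a constant $C=C(n,i(x,f)K_O(f))$ with
\[ L_f(x,r)\leq C\,\ell_f(x,r)\qquad\text{for all sufficiently small }r>0, \]
because then $H(x,f)=\limsup_{r\to0}L_f(x,r)/\ell_f(x,r)\leq C$; note also that openness of $f$ gives $B(a,\ell_f(x,r))\subseteq f(B(x,r))$ and $f(\overline{B(x,r)})\subseteq\overline{B(a,L_f(x,r))}$.

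Fix a small $r>0$, small enough that $\ell:=\ell_f(x,r)>0$ and that, writing $L:=L_f(x,r)$, the set $U_{2L}$ is a normal neighbourhood compactly contained in $U$ (which also contains $\overline{B(x,r)}$, since $f(\overline{B(x,r)})\subseteq\overline{B(a,L)}\subseteq B(a,2L)$); set $s=2L$. Choose $y_1,y_2\in S(x,r)$ with $|f(y_1)-a|=L$ and $|f(y_2)-a|=\ell$. Using path lifting for discrete open maps inside $U_s$ — where every lift of a path issuing from $a$ must begin at $x$, since $f^{-1}(a)\cap U_s=\{x\}$ — I would construct two disjoint continua in $U_s$: a continuum $E_0$ that is a lift of the segment $[a,f(y_2)]$ joining $x$ to $y_2$, so that $f(E_0)\subseteq[a,f(y_2)]\subseteq\overline{B(a,\ell)}$ and $\operatorname{diam}(E_0)\geq|x-y_2|=r$; and a continuum $E_1$ that is a total lift, beginning at $y_1$, of the ray emanating from $f(y_1)$ in the direction of $f(y_1)-a$, so that $E_1$ runs from $y_1$ to $\partial U_s$ and $f(E_1)\subseteq\{w:L\leq|w-a|\leq 2L\}$. (These are disjoint once $\ell<L$, since then $f(E_0)$ and $f(E_1)$ lie in disjoint spherical shells about $a$; the case $\ell=L$ is trivial.) Then $R:=U_s\setminus(E_0\cup E_1)$ is a ring domain separating the bounded continuum $E_0$ from the unbounded complementary set that contains $E_1$ together with $\overline{\R^n}\setminus U_s$, and since $\operatorname{dist}(E_0,E_1)\leq|y_1-y_2|\leq 2r\leq 2\operatorname{diam}(E_0)$, a comparison with a Teichm\"uller ring shows that the family $\Gamma$ of curves joining the two boundary components of $R$ satisfies $M(\Gamma)\geq c_0(n)>0$.

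Now push $\Gamma$ forward by $f$. Each curve of $f\Gamma$ starts in $f(E_0)\subseteq\overline{B(a,\ell)}$ and ends in $\{w:|w-a|\geq L\}$, so it contains a subcurve joining $S(a,\ell)$ to $S(a,L)$ inside the spherical annulus $A(a;\ell,L)$; hence $M(f\Gamma)\leq\omega_{n-1}\big(\log(L/\ell)\big)^{1-n}$. On the other hand, $\Gamma$ lies in $U_s$, on which $f$ has multiplicity exactly $i(x,f)$, so the $K_O$-inequality for moduli of curve families under a quasiregular map yields $M(\Gamma)\leq i(x,f)K_O(f)\,M(f\Gamma)$, and therefore $M(f\Gamma)\geq c_0(n)/(i(x,f)K_O(f))$. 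Combining the two estimates gives $\big(\log(L/\ell)\big)^{1-n}\geq c_0(n)/\big(\omega_{n-1}\,i(x,f)K_O(f)\big)$, which bounds $L_f(x,r)/\ell_f(x,r)=L/\ell$ by a constant $C=C(n,i(x,f)K_O(f))$ independent of $r$; letting $r\to0$ completes the proof.

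The essential difficulty is the failure of injectivity of $f$: one cannot say that $f$ carries the ring $R$ onto the ring $\overline{\R^n}\setminus(f(E_0)\cup f(E_1))$, so the whole comparison must be carried out at the level of moduli of curve families together with the modulus inequalities for quasiregular mappings, and the normal-neighbourhood structure of $x$ has to be used twice: to keep the multiplicity factor in the $K_O$-inequality equal to exactly $i(x,f)$ (which is why the constant depends only on $n$ and on the product $i(x,f)K_O(f)$, rather than on $K_I(f)$ or on $i(x,f)$ and $K_O(f)$ separately), and, via total path lifting, to force $f(E_0)$ into the small ball $\overline{B(a,\ell)}$ while $f(E_1)$ is pushed out past the sphere $S(a,L)$, which is exactly what makes $f\Gamma$ thin enough. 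The rest is bookkeeping — choosing the scale $s$ so that the relevant balls and the normal neighbourhood nest correctly, verifying the lifting statements, and, when $n\geq 3$, checking the topology of $U_s$ so that $R$ really is a ring domain — all of which is handled by the methods of \cite{R}.
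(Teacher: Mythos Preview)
The paper does not prove this theorem; it is quoted verbatim from Rickman's monograph \cite{R} (Theorem~II.4.3) and used as a black box, so there is no ``paper's own proof'' to compare against. Your sketch is essentially the argument Rickman gives: normal neighbourhoods, path lifting to build the two continua, a Loewner/Teichm\"uller lower bound on the modulus of the connecting family, the spherical-ring upper bound on the image family, and the $K_O$-inequality with multiplicity $i(x,f)$ to close the loop. So in spirit your proposal is exactly the cited proof.

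One technical point worth tightening: you assert that the lift $E_0$ of $[a,f(y_2)]$ joins $x$ to $y_2$, but a lift starting at $x$ ends at \emph{some} preimage of $f(y_2)$, not necessarily $y_2$; to guarantee $\operatorname{diam}(E_0)\geq r$ you should lift the reversed segment $[f(y_2),a]$ starting from $y_2$ and use that $f^{-1}(a)\cap U_s=\{x\}$ to force the terminal point to be $x$. Similarly, the lower modulus bound $M(\Gamma)\geq c_0(n)$ needs both continua to be large relative to their separation, so you should make explicit why the $E_1$-side (i.e.\ $E_1\cup\partial U_s$, the unbounded complementary component of $R$) has diameter at least comparable to $r$; this is immediate once $U_s\supset\overline{B(x,r)}$, but it should be said. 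These are indeed the ``bookkeeping'' items you flag at the end, and Rickman handles them carefully.
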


Recall that a family $\mathcal{F}$ of $K$-quasiregular mappings defined on a domain $U\subset \R^n$ is called normal if every sequence in $\mathcal{F}$ has a subsequence which converges uniformly on compact subsets of $U$ to a $K$-quasiregular mapping.
There is a version of Montel's Theorem for quasiregular mappings due to Miniowitz.

\begin{theorem}[\cite{Mini}]
\label{thm:normal}
Let $\mathcal{F}$ be a family of $K$-quasiregular mappings defined on a domain $U \subset \R^n$. Then there exists a constant $q=q(n,K)$ so that if $a_1,\ldots, a_q$ are distinct points in $\R^n$ satisfying $f(U) \cap \{ a_1,\ldots, a_q \} = \emptyset$ for all $f\in \mathcal{F}$, then $\mathcal{F}$ is a normal family.
\end{theorem}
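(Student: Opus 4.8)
The plan is to argue by contradiction, deducing normality from a Zalcman-type rescaling lemma for quasiregular mappings together with Rickman's analogue of Picard's theorem: there is a constant $q_0 = q_0(n,K)$ such that every non-constant $K$-quasimeromorphic mapping $g \colon \R^n \to \overline{\R}^n = \R^n \cup \{\infty\}$ omits at most $q_0$ points of $\overline{\R}^n$. One then simply takes $q = q(n,K) = q_0(n,K)+1$; the precise value is immaterial.

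First I would reduce normality to equicontinuity. A family of $K$-quasiregular mappings of $U$ is normal if and only if it is equicontinuous on compact subsets of $U$ with respect to the spherical metric on the target; this is the Arzel\`a--Ascoli theorem together with the fact that a locally uniform spherical limit of $K$-quasiregular mappings is either $K$-quasimeromorphic or the constant map $\equiv \infty$. So, assuming $\mathcal{F}$ is not normal, there is a point $x_0 \in U$ at which $\mathcal{F}$ fails to be spherically equicontinuous.

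The heart of the argument is the rescaling lemma, which is Miniowitz's adaptation of Zalcman's lemma. From the failure of equicontinuity at $x_0$, the usual maximal-value selection argument --- with the linear distortion estimate of Theorem \ref{thm:lindist} controlling the relevant scaling quantities, and the bounded distortion keeping the dilatations of the rescaled maps under control --- produces mappings $f_j \in \mathcal{F}$, points $x_j \to x_0$, and radii $\rho_j \to 0^+$ such that
\[ g_j(y) = f_j(x_j + \rho_j y) \]
converges locally uniformly on $\R^n$, in the spherical metric, to a \emph{non-constant} $K$-quasimeromorphic mapping $g \colon \R^n \to \overline{\R}^n$; the normalization built into the selection is exactly what forces $g$ to be non-constant.

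Finally I would derive the contradiction. Each $f_j$ omits $a_1,\dots,a_q$, hence so does each $g_j$. Since non-constant quasiregular mappings are open and discrete, a Hurwitz-type argument applies: if $g(y_0)=a_i$ for some $y_0 \in \R^n$ and some $i$, then $g-a_i$ has an isolated zero of positive local index at $y_0$, so $g$ maps a small sphere about $y_0$ onto a set winding around $a_i$ with nonzero degree; for $j$ large, $g_j$ is uniformly close to $g$ on that sphere, hence also takes the value $a_i$ nearby --- contradicting that $g_j$ omits $a_i$. Thus the non-constant quasimeromorphic mapping $g$ omits the $q$ distinct finite values $a_1,\dots,a_q$, and since $q > q_0(n,K)$ this contradicts Rickman's theorem. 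The main obstacle is the rescaling lemma: producing a genuinely non-degenerate quasimeromorphic limit with controlled dilatation, and treating $\infty$ on the same footing as the finite $a_i$ via the spherical metric; Rickman's deep Picard-type theorem itself is used as a black box.
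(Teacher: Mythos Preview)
The paper does not give its own proof of this statement: Theorem~\ref{thm:normal} is simply quoted from Miniowitz \cite{Mini} and used as a black box, so there is no ``paper's proof'' to compare against. Your outline is a faithful sketch of the standard argument (and essentially of Miniowitz's original): reduce normality to spherical equicontinuity via Arzel\`a--Ascoli, apply a Zalcman-type rescaling lemma to extract a non-constant $K$-quasimeromorphic limit on $\R^n$, use a Hurwitz/openness argument to show the limit still omits $a_1,\dots,a_q$, and contradict Rickman's Picard theorem. The one point to be slightly careful about is that Rickman's theorem concerns quasiregular maps $\R^n\to\R^n$, so you should also argue that the limit $g$ omits $\infty$ (or otherwise reduce to a quasiregular map into $\R^n$); this is harmless here since each $f_j$ already maps into $\R^n$, but it deserves a sentence.
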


The constant $q$ here is called Rickman's constant and arises from Rickman's version of Picard's Theorem, see \cite[Theorem IV.2.1]{R}.

\subsection{Generalized derivatives and infinitesimal spaces}

In \cite{GMRV}, a generalization for the derivative of a quasiregular mapping $f$ at $x_0$ was defined as follows. For $t >0$, let
\begin{equation}
\label{eq:fe} 
f_{t}(x) := \frac{ f(x_0  + t x) - f(x_0) }{\rho_f(t)},
\end{equation}
where $\rho_f(r)$ is the mean radius of the image of a sphere of radius $r$ centered at $x_0$ and given by
\begin{equation}
\label{eq:rho} 
\rho_f(r) = \left(\frac{\lambda[f(B(x_0,r))]}{\lambda[B(0,1)]}\right)^{1/n} .
\end{equation}
Here $\lambda$ denotes the standard Lebesgue measure. While each $f_{t}(x)$ is only defined on a ball centered at $0$ of radius $d(x_0,\partial D) / t$, when we consider limits as $t \to 0$, we obtain mappings defined on all of $\R^n$. Of course, there is no reason for such a limit to exist, but because each $f_{t}$ is a quasiregular mapping with the same bound on the distortion, it follows from Theorem \ref{thm:lindist} and Theorem \ref{thm:normal} that for any sequence $t_k \to 0$, there is a subsequence for which we do have local uniform convergence to some non-constant quasiregular mapping.

\begin{definition}
\label{def:genderiv}
Let $f:U \to \R^n$ be a quasiregular mapping defined on a domain $U\subset \R^n$ and let $x_0 \in \R^n$. A generalized derivative $\varphi$ of $f$ at $x_0$ is defined by
\[ \varphi(x) = \lim_{k\to \infty} f_{t_k}(x),\]
for some decreasing sequence $(t_k)_{k=1}^{\infty}$, whenever the limit exists. The collection of generalized derivatives of $f$ at $x_0$ is called the infinitesimal space of $f$ at $x_0$ and is denoted by $T(x_0,f)$.
\end{definition}

To exhibit the behaviour of generalized derivatives, we consider some simple examples.

\begin{example}
Let $w\in \C \setminus \{ 0 \}$ and define $f(z) = wz$. Then it is elementary to check that if $z_0=0$, we have $f_{t}(z) = e^{i\arg w}z$ for any $t >0$. Consequently, $T(0,f)$ consists only of the map $\varphi(z) = e^{i\arg w} z$.
\end{example}

\begin{example}
Let $d\in\N$ and define $g(z) = z^d$. One can check that if $z_0 =0 $, we have $f_{t}(z) = z^d$ for any $t>0$ and so $T(0,g)$ consists only of the map $\varphi(z) = z^d$.
\end{example}

These examples illustrate the informal property the generalized derivatives maintain the shape of $f$ near $x_0$, but they lose information about the scale of $f$.
In general, if a quasiregular map $f$ is in fact differentiable at $x_0\in \R^n$, then $T(x_0,f)$ consists only of a scaled multiple of the derivative of $f$ at $x_0$, normalized to preserve the volume of the unit ball in $\R^n$. 

The reason for the scaling is the use of $\rho_f(r)$ in the definition of $f_{t}$. We may in fact replace $\rho_f(r)$ by $L_f(x_0,r), l_f(x_0,r)$ or any other quantity comparable to $\rho_f(r)$. In the special case of uniformly quasiregular mappings, that is, quasiregular mappings with a uniform bound on the distortion of the iterates, it was proved in \cite{HMM} that at fixed points with $i(x_0,f) = 1$, they are locally bi-Lipschitz. Consequently, in this special case one may replace $\rho_f(r)$ with $r$ itself. In general, quasiregular mappings are only locally H\"older continuous and so it does not suffice to use $r$ instead of $\rho_f(r)$.

\begin{definition}
\label{def:simple}
Let $f:U \to \R^n$ be quasiregular on a domain $U$ and let $x_0 \in U$. If the infinitesimal space $T(x_0,f)$ consists of only one element, then $T(x_0,f)$ is called simple.
\end{definition}

In both the examples above, the respective infinitesimal spaces are simple. It was shown in \cite{GMRV} that when the infinitesimal space is simple, then the function is well-behaved near $x_0$. In particular, 
\[ f(x) \sim f(x_0) + \rho_f(|x-x_0|) \varphi \left (\frac{x-x_0}{|x-x_0|} \right ), \]
where $\varphi $ is the unique generalized derivative in $T(x_0,f)$

\subsection{Statement of results}

Denote by $C(U,\R^n)$ the set of continuous functions from a domain $U\subset \R^n$ into $\R^n$. If $x\in U$ and $\mathcal{F} \subset C(U,\R^n)$, denote by $E_x:\mathcal{F} \to \R^n$  the point evaluation map, that is, if $f \in \mathcal{F}$ then $E_x(f) = f(x)$.

\begin{definition}
\label{def:orbit}
Let $f:U \to \R^n$ be a quasiregular mapping defined on a domain $U\subset \R^n$ and let $x_0 \in U$. Then the orbit of a point $x\in \R^n$ under the infinitesimal space $T(x_0,f)$ is defined by
\[ \mathcal{O}(x) = E_x(T(x_0,f)) = \{ \varphi(x) : \varphi \in T(x_0,f) \}.\]
\end{definition}

Our first main result relates the orbit space to the accumulation set of a curve.

\begin{theorem}
\label{thm:1}
Let $f:U \to \R^n$ be a quasiregular mapping defined on a domain $U\subset \R^n$ and let $x_0 \in U$. Then the orbit space $\mathcal{O}(x)$ is the accumulation set of the curve $t \mapsto f_t$, where $f_t$ is defined by \eqref{eq:fe}.
\end{theorem}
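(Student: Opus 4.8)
The plan is to show that point evaluation commutes with the operation of taking the accumulation set of the curve $c\colon t\mapsto f_t$ (as $t\to 0^+$) inside $C(\R^n,\R^n)$ equipped with the topology of local uniform convergence. Write $\operatorname{acc}(c)=\bigcap_{\delta>0}\overline{\{\,f_t : 0<t<\delta\,\}}$ for the accumulation set; equivalently, $g\in\operatorname{acc}(c)$ iff $f_{t_k}\to g$ locally uniformly for some sequence $t_k\to 0^+$. With this notation the assertion to be proved is $E_x\big(\operatorname{acc}(c)\big)=\operatorname{acc}(E_x\circ c)$, and by Definition \ref{def:orbit} the left-hand side is exactly $\mathcal O(x)$ once we identify $\operatorname{acc}(c)$ with $T(x_0,f)$.

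The first step is to record the identification $\operatorname{acc}(c)=T(x_0,f)$. Definition \ref{def:genderiv} describes $T(x_0,f)$ via decreasing sequences $t_k\searrow 0$ and pointwise limits, whereas $\operatorname{acc}(c)$ allows arbitrary $t_k\to 0^+$ and demands local uniform convergence; these two descriptions coincide. Indeed, every positive null sequence has a decreasing subsequence, and convergence (pointwise or local uniform) passes to subsequences, so the choice ``decreasing'' versus ``arbitrary'' is immaterial. Moreover, by the discussion preceding Definition \ref{def:genderiv}, each $f_t$ is quasiregular with the same distortion bound, so Theorems \ref{thm:lindist} and \ref{thm:normal} guarantee that every sequence $(f_{t_k})$ with $t_k\to 0^+$ has a subsequence converging locally uniformly to a non-constant quasiregular map. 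Consequently a pointwise limit of $(f_{t_k})$ is automatically a local uniform limit: pass to a locally uniformly convergent subsequence and observe that its limit must agree pointwise with the given limit, and then that the full sequence converges locally uniformly by the usual subsequence argument. This yields $\operatorname{acc}(c)=T(x_0,f)$, and hence $E_x(\operatorname{acc}(c))=\mathcal O(x)$.

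Next I would dispatch the easy inclusion $\mathcal O(x)\subseteq\operatorname{acc}(E_x\circ c)$: if $\varphi\in T(x_0,f)$ with $f_{t_k}\to\varphi$ locally uniformly, $t_k\to 0^+$, then in particular $f_{t_k}(x)\to\varphi(x)$ at the fixed point $x$ (noting that $x$ lies in the domain of $f_{t_k}$ once $t_k<d(x_0,\partial U)/|x|$), so $\varphi(x)$ is an accumulation value of $t\mapsto f_t(x)$. For the reverse inclusion, suppose $y$ is an accumulation value of $t\mapsto f_t(x)$, say $f_{t_k}(x)\to y$ with $t_k\to 0^+$. Invoking the precompactness from the previous step, pass to a subsequence along which $f_{t_k}\to\varphi$ locally uniformly with $\varphi\in T(x_0,f)$; then $f_{t_k}(x)\to\varphi(x)$, and since also $f_{t_k}(x)\to y$ we conclude $\varphi(x)=y$, so that $y=E_x(\varphi)\in\mathcal O(x)$.

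The step I expect to require the most care is this reverse inclusion, because in general the image of the accumulation set of a curve is strictly smaller than the accumulation set of the image; equality here is precisely the point at which the normality of $\{f_t\}$ — Miniowitz's theorem (Theorem \ref{thm:normal}) together with the bounded linear distortion of Theorem \ref{thm:lindist} — is used essentially. The remaining matters are routine bookkeeping: that each $f_t$ is defined only on an expanding ball rather than on all of $\R^n$, which is harmless since any fixed compact set (in particular the point $x$) eventually lies in the domain; and the reconciliation of the ``decreasing sequence, pointwise limit'' formulation of Definition \ref{def:genderiv} with the ``arbitrary null sequence, locally uniform limit'' formulation natural for accumulation sets, already handled in the first step.
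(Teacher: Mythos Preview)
Your proposal is correct and follows essentially the same route as the paper: both establish $\mathcal O(x)=\omega(\gamma_x)$ by checking the easy inclusion directly and obtaining the reverse inclusion via normality of $\{f_t\}$ to extract a locally uniformly convergent subsequence. You add some extra care in reconciling the ``decreasing sequence, pointwise limit'' formulation of Definition \ref{def:genderiv} with the ``arbitrary null sequence, locally uniform limit'' formulation, which the paper glosses over, but the core argument is the same.
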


Moreover, \cite[Theorem 1.5]{FN} shows that $\mathcal{O}(x)$ lies in a ring $\{ x \in \R^n : 1/C' \leq |x| \leq C' \}$ for some constant $C' \geq 1$ depending only on $n, K_O(f)$ and $i(x_0,f)$.

\begin{corollary}
\label{cor:1}
Let $f:U \to \R^n$ be a quasiregular mapping defined on a domain $U\subset \R^n$ and let $x_0 \in U$. Then the infinitesimal space $T(x_0,f)$ either consists of one element or uncountably many.
\end{corollary}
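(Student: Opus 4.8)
The plan is to combine Theorem \ref{thm:1} with a standard dichotomy for accumulation sets of curves: such a set is either a single point or a nondegenerate continuum, and a nondegenerate continuum in a metric space is uncountable. So the only work is to set things up so that the dichotomy applies and then to read off the conclusion for $T(x_0,f)$.

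First I would isolate the topological input. Let $X$ be a metric space and $\gamma\colon(0,\varepsilon]\to X$ continuous; for $0<s\le\varepsilon$ put $A_s=\overline{\gamma((0,s])}$, so the accumulation set of $\gamma$ as $t\to0$ is $\Lambda=\bigcap_{0<s\le\varepsilon}A_s$. Each $A_s$ is connected, being the closure of the continuous image of the connected set $(0,s]$, and $(A_s)_s$ is a nested family. Hence if some $A_{s_0}$ is compact, then $\Lambda$ is a nested intersection of nonempty compact connected sets and is therefore itself nonempty, compact and connected. Finally, a nonempty compact connected metric space with more than one point has no isolated points (an isolated point would be a nonempty proper clopen subset), so it is a nonempty perfect Polish space and hence uncountable by the Baire category theorem.

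Next I would check that this lemma applies to our situation. By Theorem \ref{thm:1}, for each $x\in\R^n$ the orbit $\mathcal{O}(x)$ is the accumulation set, as $t\to0$, of the curve $\gamma_x(t)=f_t(x)$ in $\R^n$; for $\varepsilon$ small this curve is well defined and continuous on $(0,\varepsilon]$, using that $f$ is continuous and that $\rho_f$ is continuous and strictly positive on $(0,\infty)$ (the latter because $f$ is open and satisfies Lusin's condition (N)). It remains to see that $\gamma_x((0,s_0])$ is bounded for some $s_0>0$: otherwise there would be $t_k\to0$ with $|f_{t_k}(x)|\to\infty$, contradicting the fact that, by Theorems \ref{thm:lindist} and \ref{thm:normal}, a subsequence of $(f_{t_k})$ converges locally uniformly to a finite-valued quasiregular map. (Equivalently, one can run the argument directly in the metrizable space $C(\R^n,\R^n)$ with the topology of local uniform convergence, where the curve $t\mapsto f_t$ has relatively compact image by normality and accumulation set exactly $T(x_0,f)$.)

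Finally I would assemble the proof. Suppose $T(x_0,f)$ is not simple, so it contains two distinct generalized derivatives $\varphi_1\neq\varphi_2$. Being distinct continuous maps, they differ at some point $x\in\R^n$, so $\mathcal{O}(x)\supseteq\{\varphi_1(x),\varphi_2(x)\}$ has at least two points. By the previous two steps $\mathcal{O}(x)$ is a nonempty compact connected subset of $\R^n$ with more than one point, hence uncountable; since the evaluation map $E_x\colon T(x_0,f)\to\mathcal{O}(x)$ is onto, $T(x_0,f)$ is uncountable as well. The main obstacle is establishing connectedness of the accumulation set, which is precisely the point where one must know the curve (or its image in function space) eventually lies in a compact set; everything else is soft once Theorem \ref{thm:1} is available.
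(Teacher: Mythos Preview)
Your proof is correct and follows essentially the same route as the paper: use Theorem~\ref{thm:1} to identify $\mathcal{O}(x)$ with the $\omega$-limit set of the curve $\gamma_x$, note this set is compact and connected (the paper records this immediately after the proof of Theorem~\ref{thm:1}, together with the ring bound ensuring compactness), and then observe that a nondegenerate continuum is uncountable, so any $x$ with $|\mathcal{O}(x)|>1$ forces $T(x_0,f)$ to be uncountable via the surjection $E_x$. The paper's own argument is considerably terser, leaving the topological dichotomy and the uncountability of nondegenerate continua implicit, whereas you spell these out carefully.
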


Since Theorem \ref{thm:1} shows that $\mathcal{O}(x)$ is compact and connected and lies in a ring, we prove the following converse in dimension two.

\begin{theorem}
\label{thm:2}
Let $X \subset \R^2 \setminus \{ 0 \}$ be a non-empty, compact and connected set. Then there exists a quasiregular mapping $f:\R^2 \to \R^2$ for which $X$ is the image of the point evaluation map $E_{x_1} : T(0,f) \to \R^2$ for $x_1 = (1,0)$.
\end{theorem}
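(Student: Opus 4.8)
The plan is to use Theorem \ref{thm:1} to convert the statement into a construction question about a single planar curve, and then to build $f$ explicitly on a sequence of round annuli shrinking to $0$, using distorted logarithmic spiral maps as the building blocks. After composing with a translation we may take the base point to be $0$ and $f(0)=0$, so that $f_t(x)=f(tx)/\rho_f(t)$. Since the point evaluation $E_{x_1}$ is continuous for the topology of local uniform convergence and, by Theorem \ref{thm:1}, $T(0,f)$ is the accumulation set of the curve $t\mapsto f_t$, the image $E_{x_1}(T(0,f))$ contains the accumulation set as $t\to 0^+$ of the planar curve $\gamma_f(t):=f_t(x_1)=f(t,0)/\rho_f(t)$; conversely, given $t_k\to 0$ with $\gamma_f(t_k)\to y$, passing to a subsequence along which $f_{t_k}\to\varphi\in T(0,f)$ (using the normality of $\{f_t\}$ furnished by Theorems \ref{thm:lindist} and \ref{thm:normal}) gives $\varphi(x_1)=y$. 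Hence $E_{x_1}(T(0,f))$ is exactly the accumulation set of $\gamma_f$, and it suffices to construct a quasiconformal $f:\R^2\to\R^2$ with $f(0)=0$ whose normalized radial curve $\gamma_f$ accumulates precisely on $X$.

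Next I would record the elementary topological input. Fix $0<a<b$ with $X\subset\{w:a\le|w|\le b\}$, a countable dense set $D\subset X$, and a sequence $\varepsilon_k\downarrow 0$ with $\varepsilon_1<a$. The $\varepsilon$-neighbourhood $N_\varepsilon(X)$ of the connected set $X$ is open and connected, hence path-connected, so I can choose points $p_1,p_2,\dots$ with $p_k\in N_{\varepsilon_k}(X)$, with every point of $D$ approximated to within $\varepsilon_k$ by infinitely many of the $p_k$, and with $|p_{k+1}-p_k|$ as small as desired. The set of accumulation points of $(p_k)$ is then exactly $X$: it lies in $X$ because eventually $d(p_k,X)<\varepsilon_k$, and it contains $X$ because $D$ is dense. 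One should think of $(p_k)$ as a slowly varying ``target curve'' whose limit set is $X$; the task of $f$ will be to arrange $\gamma_f(\kappa^k)=p_k$ for a suitable ratio $\kappa$.

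Now for the construction. Choose a small $\kappa\in(0,1)$ (to guarantee the nesting below) and work on the round annuli $A_k=\{z:\kappa^{k+1}\le|z|\le\kappa^{k}\}$, $k\ge 0$. To each $p_k$ associate a Jordan curve $\Gamma_k$ surrounding $0$: a round circle of radius $\asymp\kappa^k$ modified by a localized radial deformation, with its radius and deformation tuned so that $\Gamma_k$ encloses area exactly $\pi R_k^2$ (which will force $\rho_f(\kappa^k)=R_k$) and $\Gamma_k$ passes through $R_k p_k$, where $R_k$ is a suitably chosen decreasing sequence with $R_k\asymp\kappa^k$. Define $f$ on $A_k$ to be a quasiconformal map onto the closed region between $\Gamma_k$ and $\Gamma_{k+1}$, with prescribed boundary parametrizations $\phi_k:\{|z|=\kappa^k\}\to\Gamma_k$ satisfying $\phi_k(\kappa^k,0)=R_k p_k$; using the same $\phi_k$ on the inner boundary of $A_{k-1}$ and the outer boundary of $A_k$ glues the pieces continuously, and across $A_k$ the map carries the twist corresponding to $\arg p_{k+1}-\arg p_k$, so each $f|_{A_k}$ is a mildly twisted, mildly distorted annulus map. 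Finally extend $f$ across $|z|=1$ by a single fixed quasiconformal homeomorphism onto the exterior of $\Gamma_0$, and across $z=0$ by $f(0)=0$, which is legitimate because $\operatorname{diam}\Gamma_k\to0$ and isolated points are removable for quasiconformal maps. Since the data $(R_k,\arg p_k,|p_k|)$ vary arbitrarily slowly with $k$, each annulus map is a small perturbation of a round logarithmic spiral and can be realized with maximal dilatation bounded by a constant independent of $k$ — this is exactly what the distorted logarithmic spiral maps of Lemma \ref{lem:4} provide. The cumulative twist may be infinite, as for an honest logarithmic spiral, but each block contributes only a little, and the resulting $f$ is a quasiconformal self-homeomorphism of $\R^2$ fixing $0$.

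It remains to verify that $\gamma_f$ accumulates exactly on $X$, and here the construction pays off: by design $\gamma_f(\kappa^k)=f(\kappa^k,0)/\rho_f(\kappa^k)=p_k$, while for $t\in[\kappa^{k+1},\kappa^k]$ the point $f(t,0)$ lies on a Jordan curve trapped between $\Gamma_k$ and $\Gamma_{k+1}$, so $\gamma_f(t)$ stays within a distance tending to $0$ as $k\to\infty$ of $p_k$; together with $\gamma_f(\kappa^k)=p_k$ and the second step this identifies the accumulation set of $\gamma_f$ with that of $(p_k)$, namely $X$, and the first step converts this into the claim about $E_{x_1}(T(0,f))$. The main obstacle is the third step: the quasiconformal interpolation on each annulus must simultaneously match the prescribed boundary homeomorphisms (realizing the twist and the changing deformation), keep the maximal dilatation bounded by a constant independent of $k$, and retain enough control on the images of the intermediate circles, on the marked point $f(t,0)$, and on the mean radius $\rho_f(t)$ for the verification above to go through. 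This is precisely what the explicit formulae of Lemma \ref{lem:4} are designed to deliver, with the slow variation of the data $(p_k)$ being what keeps the distortion uniformly controlled.
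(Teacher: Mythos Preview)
Your overall strategy matches the paper's: reduce via Theorem~\ref{thm:1} to arranging that the normalized radial curve $\gamma_f(t)=f(t,0)/\rho_f(t)$ accumulates exactly on $X$, and then build $f$ on a nested sequence of round annuli shrinking to $0$. The reduction in your first paragraph is correct and is exactly how the paper proceeds.

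The gap is in your third step, and your own final paragraph correctly locates it. You describe the boundary curves $\Gamma_k$ as ``round circles modified by a localized radial deformation'' and then assert that Lemma~\ref{lem:4} furnishes the required annulus interpolation with uniformly bounded distortion and with enough control on $\rho_f$ and $f(t,0)$. But Lemma~\ref{lem:4} does something much narrower: it produces a map sending each circle $|z|=r$ to an \emph{ellipse} of fixed eccentricity $K$, rotated by $\alpha\ln r$. It does not interpolate between general Jordan curves with prescribed boundary homeomorphisms, it does not produce bumped circles, and---most importantly---it keeps $|\gamma_f(r)|=\sqrt{K}$ constant, so by itself it cannot move the target point radially at all. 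Your control on intermediate $t$ is also incomplete: knowing that $f(t,0)$ lies between $\Gamma_k$ and $\Gamma_{k+1}$ bounds $f(t,0)$, and area monotonicity bounds $\rho_f(t)$, but neither pins down the \emph{ratio} $f(t,0)/\rho_f(t)$ to lie near $p_k$.

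The paper closes this gap with a more rigid choice of image curves. Instead of bumped circles, the image of each round circle is an ellipse $h_{K,\theta}(\{|z|=r\})$; by Lemma~\ref{lem:3} this gives $\gamma_1(r)=\sqrt{K}\,e^{i\theta}$ \emph{exactly}, so $\rho_f$ is computed rather than estimated. One then approximates $X$ from outside by paths made of circular arcs and radial segments, and makes $\gamma_1$ trace these paths continuously: Lemma~\ref{lem:4} moves $\theta$ with $K$ fixed (tracing an arc), while a second building block, Lemma~\ref{lem:5}, moves $K$ with $\theta$ fixed (tracing a radial segment). Because $\gamma_1$ \emph{equals} the concatenated path rather than merely hitting a discrete sequence $(p_k)$, no separate control of intermediate $t$ is needed. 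If you replace your deformed circles by ellipses and invoke Lemma~\ref{lem:5} alongside Lemma~\ref{lem:4}, your outline becomes the paper's proof.
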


The proof will show that if $X \subset \{ z \in \R^2 : 1/C \leq |z| \leq C \}$, then we can bound the distortion of the quasiregular mapping in terms of $C$.

\section{The orbit space}

In this section, we prove Theorem \ref{thm:1}. To that end, let $f:U \to \R^n$ be a quasiregular mapping defined on a domain $U\subset \R^n$ and let $x_0 \in U$. By Theorem \ref{thm:lindist}, find $r_0 > 0$ small enough so that if $0<r<r_0$ then
\begin{equation}
\label{eq:1} 
\frac{L_f(x_0,r)}{ \ell_f(x_0,r)} \leq C_1,
\end{equation}
where $C_1 = 2C$ depends only on $n,K_O(f)$ and $i(x_0,f)$.
For $x\in \R^n$ fixed and $0<t \leq r_0/|x|$, consider the curve
\begin{equation}
\label{eq:2}
\gamma_x(t) = \frac{ f(x_0 +tx) - f(x_0) }{\rho_f(t) }.
\end{equation}

\begin{lemma}
\label{lem:lusin}
The curve $t\mapsto \gamma_x(t)$ is continuous for $0<t<r_0/|x|$.
\end{lemma}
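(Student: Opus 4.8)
The plan is to express $\gamma_x$ as a quotient and treat numerator and denominator separately. The numerator $t\mapsto f(x_0+tx)-f(x_0)$ is continuous simply because $f$ is continuous, so all the work is in the scalar function $t\mapsto\rho_f(t)$, equivalently $V(t):=\lambda[f(B(x_0,t))]$. First one records that $\rho_f(t)>0$, so that the quotient is legitimate: since $f$ is a non-constant quasiregular mapping it is open, hence $f(B(x_0,t))$ is a non-empty open set and $V(t)>0$. It then suffices to prove that $V$ is continuous on $0<t<r_0/|x|$, for then $\rho_f=(V/\lambda[B(0,1)])^{1/n}$ is continuous and positive there, and $\gamma_x$, being the quotient of a continuous vector-valued function by a continuous nonvanishing scalar, is continuous.

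Fix $t_0$ in the relevant range. For left-continuity: as $t\uparrow t_0$ the open balls $B(x_0,t)$ increase with union $B(x_0,t_0)$, so the sets $f(B(x_0,t))$ increase with union $f(B(x_0,t_0))$, and continuity from below of Lebesgue measure gives $V(t)\to V(t_0)$. The substance of the lemma is right-continuity, and this is where Lusin's condition (N) enters, which is presumably the reason for the label. As $t\downarrow t_0$ the sets $f(B(x_0,t))$ decrease, and I will show that
\[ \bigcap_{t>t_0}f(B(x_0,t))=f(\overline{B(x_0,t_0)}). \]
The inclusion $\supseteq$ is immediate from $\overline{B(x_0,t_0)}\subset B(x_0,t)$ whenever $t>t_0$. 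For $\subseteq$, if $y$ lies in every $f(B(x_0,t))$, pick $t_k\downarrow t_0$ and $z_k\in B(x_0,t_k)$ with $f(z_k)=y$; the $z_k$ lie in the compact set $\overline{B(x_0,t_1)}$, so a subsequence converges to some $z^\ast\in\overline{B(x_0,t_0)}$, and $f(z^\ast)=y$ by continuity. Since $f(\overline{B(x_0,t_1)})$ is compact and hence of finite measure, continuity from above of Lebesgue measure yields $V(t)\to\lambda[f(\overline{B(x_0,t_0)})]$ as $t\downarrow t_0$.

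It remains to see that $\lambda[f(\overline{B(x_0,t_0)})]=V(t_0)$. Writing $\overline{B(x_0,t_0)}=B(x_0,t_0)\cup S(x_0,t_0)$, where $S(x_0,t_0)$ is the sphere $\partial B(x_0,t_0)$, we get $f(\overline{B(x_0,t_0)})=f(B(x_0,t_0))\cup f(S(x_0,t_0))$. The sphere has $n$-dimensional Lebesgue measure zero, and non-constant quasiregular mappings satisfy Lusin's condition (N) (see \cite{R}), so $\lambda[f(S(x_0,t_0))]=0$ and therefore $\lambda[f(\overline{B(x_0,t_0)})]=V(t_0)$. This gives right-continuity, so $V$ is continuous at $t_0$, and the lemma follows. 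The main obstacle is precisely this last point: on letting $t\downarrow t_0$ one is forced to pass from the open ball to its closure, and one must argue that the extra piece, the image of a sphere, is $\lambda$-null; this is exactly the content of condition (N), while everything else is routine measure theory.
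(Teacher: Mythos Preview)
Your proof is correct and follows essentially the same route as the paper: reduce to continuity of $t\mapsto\lambda[f(B(x_0,t))]$ and invoke Lusin's condition (N) to kill the contribution of the image of the sphere $\partial B(x_0,t_0)$. The only difference is cosmetic: the paper argues by contradiction in a single stroke, whereas you give a direct argument, treating left- and right-continuity separately via monotone convergence of measure and also explicitly recording that $\rho_f(t)>0$ by openness of $f$; your version is a bit more careful but not genuinely different.
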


\begin{proof}
Since $f$ is quasiregular, $t\mapsto f(x_0 + tx) - f(x_0)$ is continuous. It therefore suffices to show that $\rho_f$ is continuous in $t$ or, by \eqref{eq:rho}, that $\lambda[f(B(x_0,t))]$ is continuous in $t$. Suppose this is not the case and we can find $t_0 \in (0,r_0/|x|)$ and a sequence $t_k \to t_0$ so that $\lambda[f(B(x_0,t_k))] - \lambda[f(B(x_0,t_0))]$ does not converge to $0$. 
Necessarily, we must have $t_k \neq t_0$ for infinitely many $k$.
By passing to a subsequence, we can assume that $t_k - t_0$ has the same sign for all $k$ and consequently $\lambda[f(B(x_0,t_k))] - \lambda[f(B(x_0,t_0))]$ converges to $\delta > 0$. However, this means that $\lambda[ f( \partial B(x_0,t_0) ) ] > 0$. This contradicts the fact that quasiregular mappings satisfy Lusin's (N) condition, see \cite[Proposition I.4.14 (a)]{R}, that states that if $\lambda(E) = 0$, then $\lambda(f(E)) = 0$
\end{proof}

This curve $\gamma_x(t)$ is analogous to a trajectory in the theory of differential equations. 
Since $f$ is quasiregular and \eqref{eq:1} holds for $t<r_0/|x|$, it follows that $\gamma_x(t)$ is in the ring $R = \{ x: 1/C_1 \leq |x |  \leq C_1 \}$. 
Continuing the analogy with differential equations, we define the $\omega$-limit set of $\gamma_x$ as
\[ \omega(\gamma_x) = \bigcap_{s>0} \overline{ \{ \gamma_x(t) : t<s \} }.\]
We can now prove Theorem \ref{thm:1}

\begin{proof}[Proof of Theorem \ref{thm:1}]
We will show that the $\omega$-limit set of $\gamma_x$ equals $\mathcal{O}(x)$. 
First suppose that $y \in \mathcal{O}(x)$. Then we can find a decreasing sequence $t_k \to 0$ such that
\[ y = \lim_{k\to \infty} \frac{f(x_0 + t_kx)-f(x_0)}{\rho_f(t_k)}.\]
Each term in this sequence lies on $\gamma_x$ and since $t_k$ is decreasing, they are arranged in order along the curve. Hence $y$ is contained in $\omega(\gamma_x)$.

Conversely, if $y \in \omega(\gamma_x)$, then there exist $y_k \in\gamma_x$ with $y_k \to y$. By definition $y_k = \gamma_x(t_k)$ for some $t_k>0$. Since the family of functions
\[ \left \{ \frac{ f(x_0 +t_k x) - f(x_0) }{\rho(t_k) } : k\in \mathbb{N} \right \} \]
forms a normal family, there exists a subsequence converging to a generalized derivative $g$. We must have $g(x) = y$ and we are done.
\end{proof}

Since $\omega(\gamma_x)$ is closed and connected, it immediately follows from Theorem \ref{thm:1} that $\mathcal{O}(x)$ has these properties too.

\begin{proof}[Proof of Corollary \ref{cor:1}]
If for each $x\in \R^n$ the corresponding orbit space $\mathcal{O}(x)$ has one element, then the infinitesimal space $T(x_0,f)$ has only element, defined by $\varphi(x) = E_x( T(x_0,f) )$. On the other hand, if there exists $x\in \R^n$ so that $\mathcal{O}(x)$ has more than one element, then by Theorem \ref{thm:1} it has uncountably many. Consequently, there are uncountably many different generalized derivatives in $T(x_0,f)$.
\end{proof}

\section{Realizing the orbit space}

In this section, we prove Theorem \ref{thm:2}. Since we will just work in dimension $2$, we recall (see for example \cite{FM}) that if $f:\mathbb{C} \to \mathbb{C}$ is $K$-quasiconformal, then its complex dilatation is defined by
\[ \mu_f = \frac{ f_{\overline{z}}}{f_z} = \frac{ (f_x + if_y) }{(f_x-if_y)}.\]
Then $\mu \in L^{\infty}(\mathbb{C})$ with $||\mu||_{\infty} \leq k<1$ and where $k = \frac{K-1}{K+1}$. We note that if $L>0$ and $|\mu_f| = \left| \frac{L-1}{L+1} \right|$, then $f$ is $\max \{L,1/L \}$-quasiconformal.

In proving Theorem \ref{thm:2}, our construction will involve a quasiconformal map which sends circles centred at $0$ to ellipses centred at $0$ with various eccentricities and angles. Denote by $h_{K,\theta}$ the quasiconformal map obtained by stretching by a factor $K>0$ in the $x$-direction and rotating through angle $\theta$. Then
\begin{equation}
\label{eq:hkt}
h_{K,\theta}(z) = e^{i\theta} \left (  \left ( \frac{K+1}{2} \right ) z + \left ( \frac{K-1}{2} \right  ) \overline{z} \right ).
\end{equation}
The following lemma will be useful in verifying our construction has the requisite properties.

\begin{lemma}
\label{lem:3}
Let $K>0$, $\theta \in [0,2\pi)$ and let $h_{K,\theta}$ be defined by \eqref{eq:hkt}. Then for $r>0$, we have
\[ \frac{ h_{K,\theta}( r ) }{\rho(r)} = \sqrt{K} e^{i\theta}.\]
\end{lemma}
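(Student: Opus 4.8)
The plan is to compute both quantities on the left-hand side explicitly. First I would evaluate $h_{K,\theta}$ on the circle of radius $r$: writing a point as $z = re^{i\phi}$, formula \eqref{eq:hkt} gives
\[ h_{K,\theta}(re^{i\phi}) = e^{i\theta}\left( \tfrac{K+1}{2} re^{i\phi} + \tfrac{K-1}{2} re^{-i\phi}\right) = e^{i\theta} r\left( K\cos\phi + i\sin\phi\right). \]
Thus the image of $\partial B(0,r)$ is (after the rotation by $\theta$) the ellipse with semi-axes $Kr$ and $r$, which has area $\pi K r^2$. In particular $\lambda[h_{K,\theta}(B(0,r))] = \pi K r^2$, since $h_{K,\theta}$ maps the disk of radius $r$ onto the solid ellipse. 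Hence, by the definition \eqref{eq:rho} of $\rho$ in dimension $n=2$, with $\lambda[B(0,1)] = \pi$,
\[ \rho(r) = \left( \frac{\pi K r^2}{\pi} \right)^{1/2} = \sqrt{K}\, r. \]

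Next I would evaluate $h_{K,\theta}$ at the specific real point $z = r$ (that is, $\phi = 0$): from the display above, $h_{K,\theta}(r) = e^{i\theta} r\left( K\cos 0 + i\sin 0\right) = K r e^{i\theta}$. Dividing,
\[ \frac{h_{K,\theta}(r)}{\rho(r)} = \frac{Kre^{i\theta}}{\sqrt{K}\,r} = \sqrt{K}\, e^{i\theta}, \]
which is the claim.

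There is essentially no obstacle here; the lemma is a direct computation. The only mild point requiring care is the justification that $\rho$ as defined for quasiregular maps via \eqref{eq:rho} agrees with the elementary area computation — i.e. that $h_{K,\theta}$ is a homeomorphism carrying $B(0,r)$ onto the interior of the image ellipse, so that the Lebesgue measure of the image equals the enclosed area $\pi K r^2$. This is immediate since $h_{K,\theta}$ is an invertible real-linear map (composition of a diagonal stretch and a rotation) with Jacobian determinant $K$, so $\lambda[h_{K,\theta}(B(0,r))] = K\lambda[B(0,r)] = K\pi r^2$ directly. I would phrase the proof using this Jacobian remark rather than the ellipse-area formula, as it is cleaner and makes the role of $K$ transparent.
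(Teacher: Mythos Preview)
Your proof is correct and follows essentially the same route as the paper: identify the image of $B(0,r)$ as an ellipse with semi-axes $Kr$ and $r$, deduce $\rho(r)=\sqrt{K}\,r$, compute $h_{K,\theta}(r)=Kre^{i\theta}$, and divide. Your added remarks (the explicit parametrization and the Jacobian justification) are fine embellishments but not substantively different.
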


\begin{proof}
The area of the image of the disk of radius $r$ under $h_{K,\theta}$ is an ellipse with semi-axes of length $Kr$ and $r$ and hence
\[ \rho(r) = \left ( \frac{\pi K r^2}{ \pi } \right ) ^{1/2} = \sqrt{K}r.\]
Hence $\frac{ h_{K,\theta}( r ) }{\rho(r)}  = \frac{Kre^{i\theta}}{\sqrt{K}r} = \sqrt{K}e^{i\theta}$ as required.
\end{proof}

Consequently, for $h_{K,\theta}$, $x_0=0$ and $x = 1 \in \mathbb{C}$ we have for any $r>0$ that, recalling \eqref{eq:2},
\begin{equation}
\label{eq:x}
\gamma_1(r) = \sqrt{K}e^{i\theta}.
\end{equation}

We next require specific quasiconformal constructions which interpolate between $h_{K_1,\theta_1}$ and $h_{K_2,\theta_2}$ on two circles, where either $K_1 = K_2$ or $\theta_1 = \theta_2$. There are various such results in the literature, such as Sullivan's Annulus Theorem (see for example \cite{TV}), but we want an explicit interpolation for our purposes. The first is a distorted version of the logarithmic spiral map.

\begin{lemma}
\label{lem:4}
Let $K>0$. Then if $\alpha \in \R$ satisfies 
\[ |\alpha| < \left | \frac{2K}{1-K^2} \right |,\]
there exists a quasiconformal spiral map given by
\[ S(z) =  \left (  \left ( \frac{K+1}{2} \right ) z + \left ( \frac{K-1}{2} \right  ) \overline{z} \right ) |z|^{i\alpha}.\]
Moreover, we may choose $|\alpha|$ small enough so that $S$ is $2\max \{ K, 1/K \}$-quasiconformal.
\end{lemma}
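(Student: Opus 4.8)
The plan is to verify directly that the displayed formula defines a quasiconformal homeomorphism, the bound on $\alpha$ being exactly what keeps the complex dilatation below $1$. Write $S = g\cdot\phi$ with $g(z) = \left(\frac{K+1}{2}\right)z + \left(\frac{K-1}{2}\right)\overline{z} = h_{K,0}(z)$ and $\phi(z) = |z|^{i\alpha} = (z\overline{z})^{i\alpha/2}$, so that $|\phi|\equiv 1$ and $|S(z)| = |g(z)|$; in particular $S(0)=0$, $S$ is smooth on $\C\setminus\{0\}$, and since $\phi_z = \frac{i\alpha}{2z}\phi$, $\phi_{\overline z} = \frac{i\alpha}{2\overline z}\phi$ and $g/z,\ g/\overline z$ are bounded, $|DS|$ is bounded on $\C\setminus\{0\}$, whence $S\in W^{1,\infty}_{loc}(\C)$. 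So the two things left to check are that $S$ is a homeomorphism and that $\|\mu_S\|_\infty < 1$, with $\|\mu_S\|_\infty \le \frac{L-1}{L+1}$ for $|\alpha|$ small, $L = 2\max\{K,1/K\}$.

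For the dilatation I would use the product rule to get
\[ S_z = \phi\left(\frac{K+1}{2} + \frac{i\alpha}{2}\cdot\frac{g}{z}\right), \qquad S_{\overline z} = \phi\left(\frac{K-1}{2} + \frac{i\alpha}{2}\cdot\frac{g}{\overline z}\right), \]
substitute $g/z = \frac{K+1}{2} + \frac{K-1}{2}\overline u$ and $g/\overline z = \frac{K+1}{2}u + \frac{K-1}{2}$ with $u = z/\overline z$, $|u|=1$, and note that $\mu_S = S_{\overline z}/S_z$ then depends only on $\arg z$. The key step is to expand $|S_z|^2 - |S_{\overline z}|^2$; after collecting real and imaginary parts — this bookkeeping is the main technical point — the cross terms telescope and one is left with
\[ |S_z|^2 - |S_{\overline z}|^2 = K + \frac{\alpha(K^2-1)}{2}\sin(2\arg z). \]
(Assume $K\neq1$; for $K=1$ this is the classical logarithmic spiral map, which is always quasiconformal.) Since $\sin(2\arg z)$ ranges over $[-1,1]$, this is positive for every $\arg z$ exactly when $K - \frac{|\alpha|\,|K^2-1|}{2} > 0$, i.e. when $|\alpha| < \left|\frac{2K}{1-K^2}\right|$, which is the hypothesis; hence $\|\mu_S\|_\infty < 1$ under that hypothesis.

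To see $S$ is a homeomorphism I would use that it carries each level ellipse $E_c = \{\,|g(z)| = c\,\}$ into the circle $\{|w| = c\}$, and that the $E_c$ and these circles each foliate $\C\setminus\{0\}$. Parametrising $E_c$ by $s\mapsto z(s)$ with $g(z(s)) = ce^{is}$ turns $S|_{E_c}$ into $s\mapsto c\,e^{i(s + p(s))}$ for an explicit $2\pi$-periodic function $p$ with $\|p'\|_\infty = \frac{|\alpha|\,|K^2-1|}{2K}$; under the same bound on $\alpha$ this is $<1$, so $s\mapsto s+p(s)$ is an increasing degree-one circle map, hence a homeomorphism of $\R/2\pi\Z$, and $S|_{E_c}$ is a homeomorphism onto its circle. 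As distinct $E_c$ go to disjoint circles, $S$ is a continuous bijection of $\C$, hence a homeomorphism by invariance of domain. (Alternatively, $S\colon\C\setminus\{0\}\to\C\setminus\{0\}$ is a proper local homeomorphism, so a covering of degree equal to the winding number of $S$ on $\{|z|=1\}$, which is $1$.) Together with $S\in W^{1,\infty}_{loc}(\C)$ and $\|\mu_S\|_\infty < 1$, this gives that $S$ is $K'$-quasiconformal with $K' = \frac{1+\|\mu_S\|_\infty}{1-\|\mu_S\|_\infty}$.

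Finally, for the quantitative statement: from the formula for $\mu_S$ we have $\mu_S \to \mu_{h_{K,0}}$ as $\alpha\to 0$, uniformly since the supremum is over the compact circle $\{|z|=1\}$, and $|\mu_{h_{K,0}}| = \frac{|K-1|}{K+1}$, corresponding to $\max\{K,1/K\}$-quasiconformality. Because $L = 2\max\{K,1/K\}$ satisfies $\frac{L-1}{L+1} > \frac{|K-1|}{K+1}$, taking $|\alpha|$ small enough forces $\|\mu_S\|_\infty \le \frac{L-1}{L+1}$, so $S$ is $2\max\{K,1/K\}$-quasiconformal. The main obstacle is the algebraic simplification of $|S_z|^2 - |S_{\overline z}|^2$ to the clean expression above, together with noticing that the same threshold on $\alpha$ is precisely what makes $S$ injective — no accident, since at the critical value the Jacobian of $S$ vanishes along a ray.
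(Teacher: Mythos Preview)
Your argument is correct. The dilatation computation is essentially the paper's: both reduce to the identity
\[
|S_z|^2 - |S_{\overline z}|^2 \;=\; K + \tfrac{\alpha(K^2-1)}{2}\sin(2\arg z),
\]
from which the bound $|\alpha| < \bigl|\tfrac{2K}{1-K^2}\bigr|$ drops out as the positivity condition. Your treatment of the final quantitative clause (continuity of $\mu_S$ in $\alpha$, uniformly in $\arg z$) is also the same as the paper's, stated a bit more explicitly.

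Where you genuinely diverge is in the injectivity argument. The paper uses the scaling relation $S(tz)=t\,e^{i\alpha\ln t}S(z)$, observes that $S$ sends the circle $\{|z|=r\}$ onto a rotated ellipse, and then carries out a first-order (in $t$) analysis to show that the image ellipses of $\{|z|=1\}$ and $\{|z|=1+t\}$ do not cross; the threshold on $\alpha$ emerges from the discriminant of the resulting quadratic. Your route instead exploits the identity $|S|=|g|$: $S$ carries each level ellipse $E_c=\{|g|=c\}$ onto the circle $\{|w|=c\}$, and you show directly that the induced circle map $s\mapsto s+\alpha\ln|z(s)|$ is monotone because $\|p'\|_\infty=\tfrac{|\alpha|\,|K^2-1|}{2K}<1$. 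This is cleaner --- it avoids the infinitesimal-crossing heuristic and yields global injectivity in one stroke --- and it makes transparent your closing remark that the same threshold governs both injectivity and non-degeneracy of the Jacobian. The paper's approach, on the other hand, foregrounds the scaling symmetry, which is the structural feature one would reach for when generalising.
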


As usual, $|z|^{i\alpha}$ is understood as $\exp( i\alpha \ln |z| )$ for $z\neq 0$, and $0$ otherwise. The parity of $\alpha$ indicates the direction of spiralling and $\alpha =0$ just reduces to the map $h_{K,0}$. When $K=1$, we obtain any the usual logarithmic spiral map which can take on any amount of spiralling. 

\begin{proof}
We need to check that the map $S$ defines a homeomorphism. First observe that $S$ is clearly continuous by definition.
Next, $S$ is injective on the circle of radius $t$ and maps this circle onto an ellipse. If $S$ is not globally injective, then there exist $t_1,t_2$ so that the images of the circles of radius $t_1,t_2$ under $S$ cross. Since for $t>0$ we have 
\begin{equation}
\label{eq:stz} 
S(tz) =  te^{i\alpha t} S(z),
\end{equation}
it follows that the images of the circles of radius $1$ and $t_2/t_1$ must also cross.
Consequently, it suffices to show that the condition on $\alpha$ implies that the images of the circles $|z|=1$ and $|z| = 1+t$ do not cross for all small $t>0$, and then \eqref{eq:stz} implies that $S$ is a homeomorphism.

The images of the circles of radii $1$ and $1+t$ are given by ellipses with equations
\[ \left ( \frac{x}{K} \right )^2 + y^2=1, \quad \left ( \frac{x'}{K(1+t)} \right )^2 + \left ( \frac{ y'}{1+t} \right )^2 = 1\]
respectively, where 
\[ x' = x\cos( \alpha \ln(1+t) ) - y \sin( \alpha \ln (1+t)), \quad y' = x \sin(\alpha \ln(1+t)) + y \cos(\alpha \ln(1+t)).\]
Working to first order in $t$, if $S$ is a homeomorphism then these ellipses do not intersect and hence there is no solution to the equation
\[ \left (\frac{x}{K} \right )^2 + y^2  = \left ( \frac{x-\alpha ty }{K(1+t)} \right )^2 + \left ( \frac{\alpha t x + y}{1+t} \right )^2.\]
Simplifying this equation and again neglecting terms in $t^2$, we should have no solutions to
\[ x^2+K^2y + \alpha(1-K^2)xy = 0.\]
Now this equation has no solutions when $\alpha^2(K^2-1)^2 - 4K^2 <0$. Consequently, the condition $|\alpha| < |2K|/|1-K^2|$ implies that $S$ is a homeomorphism.

To verify that $S$ is quasiconformal, we compute the complex dilatation. Since
\[ S_z = \left ( 1 + \frac{i\alpha}{2} \right )\left( \frac{K+1}{2} \right ) z^{i\alpha / 2}\overline{z}^{i\alpha /2 } + \left ( \frac{i\alpha}{2} \right ) \left ( \frac{K-1}{2} \right) z^{i\alpha/ 2 - 1}\overline{z} ^{i\alpha /2 +1},\]
and 
\[ S_{\overline{z} } = \left ( \frac{i\alpha }{2} \right ) \left ( \frac{K+1}{2} \right ) z^{i\alpha /2+1}\overline{z} ^{i\alpha / 2-1} + \left ( 1+\frac{i\alpha}{2} \right ) \left ( \frac{K-1}{2} \right ) z^{i\alpha /2}\overline{z}^{i\alpha /2},\]
we have
\[ \mu_S = \frac{S_{\overline{z}}}{S_z} = \frac{  i\alpha e^{2i\arg(z)} + \left ( \frac{K-1}{K+1}\right ) (2+i\alpha) }{2+i\alpha + i\alpha \left (\frac{K-1}{K+1} \right ) e^{-2i\arg(z)} }.\]
Writing $k = (K-1)/(K+1)$ and $\psi = 2\arg(z)$, we can rewrite this as
\begin{equation} 
\label{lem4eq1}
\mu_S = \frac{2k - \alpha \sin \psi + i\alpha (\cos \psi + k) }{ 2 + \alpha k \sin \psi + i\alpha(1+k\cos \psi )}.
\end{equation}
For $S$ to be quasiconformal, we require $||\mu_S||_{\infty} <1$. To that end, we observe that
\begin{align*}
|& 2 + \alpha k \sin \psi + i\alpha(1+k\cos \psi )|^2 - | 2k - \alpha \sin \psi + i\alpha (\cos \psi + k)  |^2 \\
&= 4(1-k^2) + 8\alpha k \sin \psi \\ &> 4[ 1-k^2 - 2 |\alpha | k] >0
\end{align*}
since the hypothesis on $\alpha$ implies that $|\alpha| < |1-k^2|/2|k|$. 
From \eqref{lem4eq1}, we see that we may choose $|\alpha|$ small enough so that the distortion of $S$ is at most $2\max \{K, 1/K\}$.
\end{proof}

The second construction involves changing the stretching in a given direction.

\begin{lemma}
\label{lem:5}
Let $K,L > 0$. Then we may find $0<t<1$ so that the map defined by
\begin{equation}
\label{eq:R} 
R(x+iy) = K(x^2+y^2)^{\nu / 2}x + iy
\end{equation}
is quasiconformal on $\{ z : t\leq |z| \leq 1 \}$, where $\nu = \ln(L/K) / \ln t$. Moreover, if $t < e^{- | \ln(L/K) |}$ then $R$ is $2\max \{ K,L,1/K,1/L \}$-quasiconformal.
\end{lemma}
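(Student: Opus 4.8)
The plan is to imitate the proof of Lemma~\ref{lem:4}: rewrite $R$ in complex coordinates, compute its complex dilatation explicitly, read off the exact condition on $t$ that makes $\|\mu_R\|_\infty<1$, and then extract the quantitative distortion bound by taking $t$ small.

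A preliminary observation fixes the intuition: since the choice $\nu=\ln(L/K)/\ln t$ gives $t^{\nu}=L/K$, we have $R(z)=Kx+iy=h_{K,0}(z)$ on $|z|=1$ and $R(z)=Lx+iy=h_{L,0}(z)$ on $|z|=t$, so $R$ genuinely interpolates between the two stretch maps on the boundary circles. Using $x=\tfrac12(z+\overline{z})$, $iy=\tfrac12(z-\overline{z})$ and $|z|^{\nu}=(z\overline{z})^{\nu/2}$ one has $R(z)=\tfrac{K}{2}(z\overline{z})^{\nu/2}(z+\overline{z})+\tfrac12(z-\overline{z})$, and differentiating with $\partial_z(z\overline{z})^{\nu/2}=\tfrac{\nu}{2z}(z\overline{z})^{\nu/2}$ gives, with $z=re^{i\phi}$ and the abbreviation $m=m(r):=Kr^{\nu}$,
\[ R_z=\tfrac12\big(m(\nu\cos^2\phi+1)+1-im\nu\cos\phi\sin\phi\big),\qquad R_{\overline{z}}=\tfrac12\big(m(\nu\cos^2\phi+1)-1+im\nu\cos\phi\sin\phi\big).\]
Exactly as in \eqref{lem4eq1}, the imaginary parts of $R_z$ and $R_{\overline{z}}$ differ only by sign, so the Jacobian is
\[ J_R=|R_z|^2-|R_{\overline{z}}|^2=m\big(\nu\cos^2\phi+1\big).\]
Since $m>0$, this is positive — so $R$ is sense-preserving and locally injective — precisely when $\nu\cos^2\phi+1>0$ for all $\phi$, i.e. when $\nu>-1$; and as $\nu=\ln(L/K)/\ln t$ with $t\in(0,1)$, this is equivalent to $|\ln t|>|\ln(L/K)|$, that is, to $t<e^{-|\ln(L/K)|}$. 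Global injectivity on $\{t\le|z|\le1\}$ then follows because $R$ maps $|z|=r$ onto the ellipse $X^2/(mr)^2+Y^2/r^2=1$ and both $mr=Kr^{\nu+1}$ and $r$ are strictly increasing in $r$ (here $\nu+1>0$), so these ellipses are strictly nested. This proves the first assertion and identifies $t<e^{-|\ln(L/K)|}$ as the range on which $R$ is quasiconformal.

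For the second assertion, write the pointwise maximal dilatation as $K_R(z)=\tfrac{1+|\mu_R(z)|}{1-|\mu_R(z)|}$, use $1-|\mu_R|\ge\tfrac12(1-|\mu_R|^2)$, and note $1-|\mu_R|^2=J_R/|R_z|^2$; this reduces matters to bounding $|R_z|^2/\big(m(\nu\cos^2\phi+1)\big)$ uniformly over $r\in[t,1]$ and $\phi$. On the annulus $m=Kr^{\nu}$ runs monotonically between $K$ and $L$, hence $m\in[\min\{K,L\},\max\{K,L\}]$, a fixed interval independent of $t$, while $|\nu|=|\ln(L/K)|/|\ln t|\to0$ as $t\to0$. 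As $\nu\to0$, the cross term $m\nu\cos\phi\sin\phi$ and the deviation of $\nu\cos^2\phi+1$ from $1$ vanish uniformly in $(m,\phi)$, so $\mu_R\to\frac{m-1}{m+1}$ and $K_R\to\max\{m,1/m\}\le\max\{K,L,1/K,1/L\}$ uniformly; therefore, choosing $t$ small enough forces $K_R\le 2\max\{K,L,1/K,1/L\}$ throughout the annulus.

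The main obstacle is precisely this last, quantitative step: one must make the estimate of $K_R$ uniform in both $r$ and $\phi$ while keeping track of how $m(r)$ and $\nu$ depend on the chosen $t$, controlling the error terms proportional to $\nu$. It is worth emphasizing that $t<e^{-|\ln(L/K)|}$ is exactly the threshold for $R$ to be quasiconformal at all, whereas the factor-$2$ bound genuinely needs $|\nu|$ small, i.e. $t$ taken sufficiently small below that threshold — which is permissible since $t$ is ours to choose.
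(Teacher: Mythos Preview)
Your argument follows essentially the same route as the paper's: both compute the complex dilatation of $R$ (you via $R_z,R_{\overline z}$ in polar form, the paper via $R_x,R_y$), both observe that the numerator and denominator of $\mu_R$ have imaginary parts of equal magnitude so that $|\mu_R|<1$ reduces to a real-part comparison equivalent to $\nu+1>0$, and both deduce global injectivity from the fact that the image ellipses $X^2/(Kr^{\nu+1})^2+Y^2/r^2=1$ are strictly nested when $\nu+1>0$.

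The one substantive difference is in the quantitative ``Moreover'' clause. The paper bounds $|(\nu+1)x^2+y^2+i\nu xy|<2(x^2+y^2)$ directly whenever $|\nu|<1$ and from this reads off $|\mu_R(z)|\le\dfrac{2K|z|^\nu-1}{2K|z|^\nu+1}$, so that the $2\max\{K,L,1/K,1/L\}$ distortion bound holds on the entire range $t<e^{-|\ln(L/K)|}$. Your limiting argument (letting $\nu\to0$ with $m=Kr^\nu$ confined to $[\min\{K,L\},\max\{K,L\}]$) is correct, but it only yields the bound for $t$ \emph{sufficiently small} below that threshold, as you explicitly acknowledge. This is weaker than the lemma as literally stated; however, it is precisely what the proof of Theorem~\ref{thm:2} actually invokes (``$t$ chosen small enough that $R$ has distortion at most $2\max\{K,L,1/K,1/L\}$''), so nothing is lost for the application.
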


The point about this construction is that on $|z| = 1$, $R = h_{K,0}$ and on $|z| = t$, $R = h_{L,0}$.

\begin{proof}
With the function $R$ as defined by \eqref{eq:R}, for $t\leq r \leq 1$, $R$ maps the circle $|z| = r$ onto the ellipse with semi-axes $ir$ and $Kr^{1+ \ln(L/K) / \ln t }$. The size of both semi-axes are continuous and monotonic in $r$ for $1+\ln(L/K) / \ln t >0$ and hence $R$ is a homeomorphism if $t < e^{- | \ln(L/K) |}$. We may therefore assume that $1+\nu >0$.

To compute the complex dilatation, the partial derivatives of $R$ are
\[ R_x = K(x^2+y^2)^{\nu/2 - 1}( (\nu +1)x^2 + y^2) \quad R_y = \nu Kxy(x^2+y^2)^{\nu/2 - 1} + i.\]
Since $R_z = (R_x - iR_y)/2$ and $R_{\overline{z}} = (R_x + iR_y)/2$, we have
\[ \mu_R = \frac{K(x^2 + y^2)^{\nu/2 - 1}( (\nu+1)x^2 + y^2 +  i \nu xy ) - 1}{ K(x^2 + y^2)^{\nu/2 - 1}( (\nu+1)x^2 + y^2 -  i \nu xy ) + 1}.\]
Clearly the absolute values of the imaginary parts of numerator and denominator agree, and the real part of the numerator is strictly less than the real part of the denominator since $1+\nu >0$. It follows that $|| \mu_R ||_{\infty} <1$ and $R$ is quasiconformal.

One can check that if $t < e^{- | \ln(L/K) |}$, that is, if $|\nu| <1$ then $| (\nu+1)x^2 + y^2 + i\nu xy | < 2(x^2+y^2)$ and consequently
\[ | \mu_R(z)| \leq \frac{ 2K |z|^{\nu} - 1 }{2K|z|^{\nu} +1 }\]
for $t\leq |z| \leq 1$. On this range, $K|z|^{\nu}$ lies between $K$ and $L$. Hence, for such a choice of $t$, $R$ is $2\max \{ K,L,1/K,1/L \}$-quasiconformal.
\end{proof}

We will now prove Theorem \ref{thm:2}, that given a non-empty, compact, connected subset of $\R^2 \setminus \{ 0 \}$, we can realize it as an orbit space for a quasiregular, and in fact quasconformal, map.

\begin{proof}[Proof of Theorem \ref{thm:2}]

Let $X \subset \R^2 \setminus \{0 \}$ be compact and connected. For $k\in \N$, let $U_k$ be an open $1/k$-neighbourhood of $X$. We can find $K\in \N$ and $C>0$ so that for $k\geq K$, $U_k \subset \{ z : 1/C \leq |z| \leq C \}$.
For $k\geq K$, find a path $\Gamma_k \subset U_k$ starting and ending at (possibly different) points of $X$ so that:
\begin{itemize}
\item $\Gamma_k$ is made up of finitely many radial line segments and circular arcs centred at $0$,
\item for every $z\in U_k$, there exists $w\in \Gamma_k$ with $|z-w| <1/k$,
\item the endpoint of $\Gamma_k$ coincides with the starting point of $\Gamma_{k+1}$.
\end{itemize}

Our aim is to construct a quasiconformal map $f$ so that, recalling \eqref{eq:2}, the curve $\gamma_{1}$ is the concatenation of $\Gamma_k$ for $k\geq K$. If this is so, then since by construction $\gamma_{1}$ accumulates exactly on $X$, we are done. Our map $f$ will send a circle of radius $r$ to an ellipse centred at $0$ with appropriate eccentricity and orientation so that $\gamma_{1}(r)$ has the required value. Recall that Lemma \ref{lem:3} and \eqref{eq:x} says what ellipse we need to obtain a required value for $\gamma_{1}(r)$.

To this end, we will give a parameterization $p_k : [r_{k+1},r_k] \to \Gamma_k$ for $k\geq K$, where $r_k$ is given and $r_{k+1}$ is to be determined.
Suppose $k\geq K$, we have the open set $U_k$ and a point $p_k(r_k) \in X$. We can find a path $\Gamma_k$ with the required properties, made up of $\Gamma_k^1, \ldots, \Gamma_k^{m}$ where $m=m(k)$ and each $\Gamma_k^j$ is either a radial line segment or an arc of a circle centred at $0$. We must have $r_k^m = r_{k+1}^1$. The parameterization for $\Gamma_k^j$ is given by $p_k^j:[r_k^{j+1},r_k^j]\to \Gamma_k^j$, where we are given $r_k^j$ and have to determine $r_k^{j+1}$.

{\bf Case (i):} $\Gamma_k^j$ is an arc of a circle, say from $se^{i\theta_1}$ to $se^{i\theta_2}$ with $1/C \leq s \leq C$ and the appropriate orientation. By \eqref{eq:x}, on $|z| = r_k^j$ we have $f(z) = h_{s^2,\theta_1}(z)$ and $\gamma_1(r_k^j) = se^{i\theta_1}$.

Apply Lemma \ref{lem:4} with $K = s^2$ and $\alpha$ chosen with parity to give the correct direction of spiralling commensurate with the orientation of our circular arc, and $|\alpha|$ chosen small enough so that $S$ has distortion at most $2\max \{ K,1/K\}$. We then choose $r_k^{j+1}$ so that on $\{ z:  r_k^{j+1} \leq  |z| \leq r_k^j \}$,
\[ f(z) = r_k^j e^{i\theta_1}S \left ( \frac{z}{r_k^j} \right ),\]
and $f(r_k^{j+1}) = s^2e^{i\theta_2}$. Then by \eqref{eq:x}, we have $\gamma_1(r_k^{j+1}) = se^{i\theta_2}$. 

{\bf Case (ii):} $\Gamma_k^j$ is a radial  line segment, say from $s_1e^{i\theta}$ to $s_2e^{i\theta}$ with $s_1,s_2 \in [1/C,C]$.
 By \eqref{eq:x}, on $|z| = r_k^j$ we have $f(z) = h_{s_1^2,\theta}(z)$ and $\gamma_1(r_k^j) = s_1e^{i\theta}$.

Apply Lemma \ref{lem:5} with $K=s_1^2$ and $L=s_2^2$ and $t$ chosen small enough that $R$ has distortion at most $2\max \{ K,L,1/K,1/L \}$. Choosing $r_k^{j+1}  = t$, we have
\[ f(z) = r_k^je^{i\theta} R \left ( \frac{z}{r_k^j} \right ),\]
and $f(r_k^{j+1}) = s_2^2e^{i\theta}$. Then by \eqref{eq:x}, we have $\gamma_1(r_k^{j+1}) = s_2e^{i\theta}$.

These two cases show how to parameterize each sub-arc of $\Gamma_k$ and hence inductively how to define a parameterization for $\gamma_{e_1}$ from $(0,r_K]$. By construction, the obtained map $f$ has distortion at most $4C^2$ and hence is quasiconformal. 
\end{proof}

We remark that if for each sub-arc, if we chose $\alpha$ and $t$ to be very small, we can obtain an upper bound for the distortion of the corresponding $f$ arbitrarily close to $C^2$.

\section{Examples}

In this section, we exhibit some examples.
\begin{enumerate}[(i)]
\item We again remark that if $f$ is simple at $x_0$, then there is only one element of the infinitesimal space and so $\mathcal{O}(x)$ consists of exactly one point for each $x \in \R^n$.
\item The logarithmic spiral map $z\mapsto z|z|^{i\alpha}$ for $\alpha \in \R$ has $\mathcal{O}(w)$ equal to the circle of radius $|w|$, for $w\neq 0$.
\item The uniformly quasiregular map $H$ constructed in \cite[Theorem 1.8]{FMWW} has a radial line segment as its orbit space for any $x\in \R^n \setminus \{ 0 \}$, since $H$ is radial and behaves like different powers of $r$ on different $r$-intervals. 
\end{enumerate}

It is worth pointing out that just because one orbit consists of one point for a given map $f$ and $x_0 \in \R^n$, it does not imply that all orbits do. For example, we can define a quasiconformal map $f:\R^2 \to \R^2$ which maps each circle of radius $r$ onto itself, fixes every point of the positive real axis but so that, for $x>0$, $f$ maps $-x$ onto $xe^{i\theta(x)}$ where $\theta$ is continuous and oscillates between $\pi/2$ and $3\pi/2$. Then for $x>0$, $\mathcal{O}(x)$ consists of one point, but $\mathcal{O}(-x)$ consists of a semicircle.

Finally, we show that the behaviour of a generalized derivative on a compact set does not determine the generalized derivative.

\begin{proposition}
There exists a quasiconformal map $f:\mathbb{D} \to \mathbb{D}$ so that for any $R>0$, there exist distinct generalized derivatives $g_1,g_2 \in T(0,f)$ which agree on $\{z:|z|<R \}$.
\end{proposition}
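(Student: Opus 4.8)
The plan is to realize $f$ as a \emph{radial stretch map} whose logarithmic radial profile carries infinitely many localized ``defects'' placed at scales marching down to the origin. Write $f(z)=\phi(|z|)\,z/|z|$ (with $f(0)=0$) for a strictly increasing continuous $\phi\colon(0,1]\to(0,1]$ satisfying $\phi(1)=1$ and $\phi(r)\to 0$ as $r\to 0$, and set $\Psi(s)=\log\phi(e^{s})$, so $\Psi\colon(-\infty,0]\to(-\infty,0]$ with $\Psi(0)=0$. Since the image of $B(0,t)$ under $f$ is $B(0,\phi(t))$ we get $\rho_f(t)=\phi(t)$, and hence, recalling \eqref{eq:fe},
\[ f_t(z)=\frac{\phi(t|z|)}{\phi(t)}\,\frac{z}{|z|}=\exp\!\bigl(\Psi(\log t+\log|z|)-\Psi(\log t)\bigr)\,\frac{z}{|z|}. \]
Writing $s=\log t$, the generalized derivatives of $f$ at $0$ are therefore exactly the maps $g(z)=\exp\!\bigl(\eta(\log|z|)\bigr)\,z/|z|$, where $\eta$ ranges over the locally uniform subsequential limits of the translated profiles $u\mapsto\Psi(s+u)-\Psi(s)$ as $s\to-\infty$. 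If $\Psi$ is taken piecewise linear with every slope in $\{1,2\}$, then these translates all vanish at $0$ and are uniformly $2$-Lipschitz, so such limits exist along subsequences (by equicontinuity, which is just the normal-family property in this setting); moreover $f$ is quasiconformal, since for such a radial stretch the two singular values of $Df$ are $\phi'(r)$ and $\phi(r)/r$, whose ratio is the local slope of $\Psi$ (hence lies in $[1,2]$), so $f$ is $2$-quasiconformal, indeed $2$-Lipschitz and so in $W^1_{2,loc}(\mathbb{D})$.

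Next I would build $\Psi$. First assign to each positive integer $k$ a ``job'': either \emph{clean}, or \emph{defect-}$n$ for some $n\in\N$, arranging via a bookkeeping function that the clean job and each defect-$n$ job gets assigned to infinitely many indices $k$. Then recursively choose ``marked scales'' $\sigma_1>\sigma_2>\cdots\to-\infty$ with the gaps $\sigma_k-\sigma_{k+1}$ growing and large enough to dominate the $\log n$ offsets of the neighbouring jobs. Let $\Psi$ have slope $1$ everywhere, except that near each marked scale $\sigma_k$ carrying job defect-$n$ we give $\Psi$ slope $2$ on the unit interval $[\sigma_k+\log n,\sigma_k+\log n+1]$. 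Because the gaps grow, on any fixed compact $u$-interval the translate $u\mapsto\Psi(\sigma_k+u)-\Psi(\sigma_k)$ is, for all large $k$ belonging to a fixed job, \emph{eventually equal} to a single limit profile: $\eta_0(u)=u$ for the clean job, and for defect-$n$ the profile $\eta_n$ that equals $u$ on $(-\infty,\log n]$, has slope $2$ on $[\log n,\log n+1]$, and equals $u+1$ on $[\log n+1,\infty)$. Consequently the corresponding $f_{t_k}$ converge locally uniformly on all of $\mathbb{C}$ to $g_0(z)=z$ and to $g_n(z)=\psi_n(|z|)\,z/|z|$, where $\psi_n(r)=r$ for $r\le n$, $\psi_n(r)=r^2/n$ for $n\le r\le ne$, and $\psi_n(r)=er$ for $r\ge ne$. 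In particular $g_0,g_n\in T(0,f)$, they agree (with the identity) on $\{|z|<n\}$, and $g_n\ne g_0$ because $g_n(z)=ez$ for $|z|\ge ne$.

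Finally, given $R>0$ put $n=\lceil R\rceil$; then $g_0$ and $g_n$ are distinct generalized derivatives in $T(0,f)$ agreeing on $\{|z|<n\}\supseteq\{|z|<R\}$, which is exactly the claim. I expect the only real work to be the bookkeeping --- producing a \emph{single} profile $\Psi$ that simultaneously exhibits the identity together with every $g_n$ --- and the verification that the translated profiles converge locally uniformly on \emph{all} of $\R$ and not merely near the origin; the growing-gap condition makes the latter routine, since on each fixed compact $u$-interval the relevant translates become exactly equal to the limit profile once $k$ is large. The one design choice that genuinely matters is to place each defect to the \emph{right} of its marked scale (near $u=\log n$, i.e.\ near $|z|=n$): this is precisely what forces $g_0$ and $g_n$ to agree on the \emph{large} disk $\{|z|<n\}$ rather than only on $\{|z|<1\}$.
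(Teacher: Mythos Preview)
Your proof is correct and takes a genuinely different route from the paper's. The paper builds $f$ from \emph{Dehn twists}: it keeps $|f(z)|=|z|$ for all $z$ (so $\rho_f(r)=r$) and, on a nested sequence of annuli $\{t_n\le|z|\le s_n\}$ and $\{r_{n+1}\le|z|\le u_n\}$ with $s_n/t_n=u_n/r_{n+1}=2$, inserts scaled copies of the twists $f^\pm(z)=e^{\pm 2\pi i(|z|-1)}z$, leaving $f$ equal to the identity on the increasingly long complementary annuli. Along the scales $\delta_n=r_n/R$ the rescalings see the identity on $\{|z|<R\}$ followed by a negative twist on $\{R<|z|<2R\}$; along $\epsilon_n=t_n/R$ they see the identity on $\{|z|<R\}$ followed by a positive twist. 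The two limits agree on the disk but spiral in opposite senses just outside it.

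Your construction is in a sense dual: you freeze the angular variable and place the defects in the \emph{radial} direction, encoding the map in a piecewise-linear log-profile $\Psi$ with slopes in $\{1,2\}$. This buys you completely explicit formulas for all the relevant generalized derivatives and reduces the convergence analysis to an elementary translated-profile argument; it also makes the distortion bound ($K\le2$) immediate from $\Psi'\in[1,2]$. The paper's approach is a bit more geometric and produces two limits that differ qualitatively (opposite rotation) rather than only by a stretch factor, but it is also sketchier about the actual convergence. Either construction generalizes to $\R^n$ without difficulty.
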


\begin{proof}
Let $R_t(z) = e^{it}z$ be the rotation centred at $0$ through angle $t$, and let $A$ be the ring domain $A = \{ z : 1 \leq |z|\leq 2 \}$. On $A$, define the Dehn twists
\[ f^+(z) = R_{2\pi (|z|-1)}(z) \text{ and } f^-(z) = R_{2\pi(1-|z|)}(z).\]
One can check that these mappings are bi-Lipschitz and hence quasiconformal.

Next, we define sequences $r_{n+1}<u_n<t_n<s_n<r_n$ with $r_n=1$, $r_n/s_n \to \infty$, $s_n/t_n = 2$, $t_n/u_n \to \infty$ and $u_n/r_{n+1}=2$. We define a map $f:\mathbb{D} \to \mathbb{D}$ as follows. For $|z| \in [s_n,r_n] \cup [u_n,t_n]$ we set $f(z)=z$. For $|z|\in [t_n,s_n]$, we set $f(z) = t_nf^+(z/t_n)$. Finally, for $|z| \in [r_{n+1},u_n]$ we set $f(z) = r_{n+1}f^-(z/r_{n+1})$.
The map $f$ is quasiconformal, since $f^+,f^-$ are. 

Now let $R>0$. Consider sequences $(\delta_n)_{n=1}^{\infty}$ and $(\epsilon_n)_{n=1}^{\infty}$ given by 
\[ \delta_n = \frac{r_n}{R} \text{ and } \epsilon_n = \frac{t_n}{R}.\]
Computing the generalized derivatives associated to these two sequences, we obtain $g_1,g_2$ respectively, both of which are the identity in $\{z : |z|<R\}$. This is due to the fact that $r_n/s_n \to \infty$ and $t_n/u_n \to \infty$. However, in $\{ z : R<|z| <2R \}$, $g_1$ and $g_2$ differ since they spiral in different directions in the $x_1,x_2$ plane.
\end{proof}

\end{document}